\documentclass[a4paper,12pt]{article}
\usepackage{latexsym,amssymb,amsmath,amsthm,amsfonts,euscript}

\begin{document}

\newcounter{lemma}
\newcommand{\lemma}{\par \refstepcounter{lemma}%
{\bf Lemma \arabic{lemma}.}}

\newcounter{theorem}
\newcommand{\theorem}{\par \refstepcounter{theorem}%
{\bf Theorem \arabic{theorem}.}}



\newcounter{corolary}
\newcommand{\corolary}{\par \refstepcounter{corolary}%
{\bf Corollary \arabic{corolary}.}}


\title{Estimates of the modulus of continuity of the logarithmic double layer potential in the closure of domain}
\author{Sergiy Plaksa\thanks{Institute of Mathematics of the National Academy of Sciences of Ukraine, Kyiv, Ukraine.
E-mail: \texttt{plaksa62@gmail.com}} \and Alexander Sarana\thanks{Zhytomyr Ivan Franko State University, Zhytomyr, Ukraine.
E-mail: \texttt{sarana.alexandr@gmail.com}}}
\maketitle

\begin{abstract}
\noindent
We obtain estimates of the modulus of continuity for the real part of the Cauchy-type integral in the closure of domain bounded by an Ahlfors-regular integration curve.
These estimates are more exact than the well-known Zygmund estimate for the modulus of continuity
of the Cauchy-type integral.
The accuracy of estimates is proved by constructing an example of a curve and an integral density for which the specified estimates are exact with respect to the order of smallness.
\end{abstract}

\medskip
\noindent\textbf{Keywords:} logarithmic double layer potential; Cauchy-type integral; Zygmund estimate; Ahlfors-regular curve; Kr\'al curve.

\medskip
\noindent\textbf{2020 Mathematics Subject Classification:} 30E20, 31A10.
\medskip

\section{\large Introduction}
Let $\gamma$ be a closed rectifiable Jordan curve in the complex plane $\mathbb{C}$, and
let $D^{+}$ and $D^{-}$ be the interior and exterior domains
bounded by $\gamma$, respectively.

The classical theory of the logarithmic double layer potential (see, for example, J.~Plemelj \cite{Plemelj-1911})
\begin{equation}\label{log-pot}
\frac{1}{2\pi}\int\limits_{\gamma} g(t)\frac{\partial}{\partial {\bf n}_t}\left(\ln\frac{1}{|t-z|}\right)\,ds_t
 \qquad \forall\,z\in D^{\pm}
\end{equation}
is developed in the case where the integration curve $\gamma$ is a Lyapunov curve, and the integral density $g : \gamma\rightarrow\mathbb{R}$ taking values in the set of real numbers $\mathbb{R}$ is a continuous function.
Here ${\bf n}_t$ and $s_t$ denote the unit vector of the outward normal to the curve $\gamma$ at a point $t\in\gamma$ and an arc coordinate of this point, respectively.

J.~Radon \cite{Radon-46} established the continuous extension of the logarithmic double layer potential from the domains $D^{+}$ and $D^{-}$ to the boundary $\gamma$ for any continuous integral density $g : \gamma\rightarrow\mathbb{R}$
in the case where $\gamma$ is a curve of bounded rotation, i.e., a curve for which the angle between the tangent to the curve and a fixed direction is a function of bounded variation.
It is known that the class of Lyapunov curves and the class of Radon curves of bounded rotation are different, i.e., each of them contains curves that do not belong to the other class (see, for example, I.I.~Danilyuk \cite[p.~26]{Dan-m}).

J.~Kr\'al \cite{Kral-1-1964} proved that logarithmic double layer potential \eqref{log-pot} is extended continuously from the domains
$D^{+}$ and $D^{-}$ to the boundary $\gamma$ for all continuous functions $g : \gamma\rightarrow\mathbb{R}$
if and only if the curve $\gamma$ satisfies the condition
 \begin{equation}\label{um-Kral}
\sup_{\xi\in\gamma}\,\int\limits_0^{2\pi} \mu_{\gamma}(\xi,\phi)\,d\phi<\infty,
\end{equation}
where $\mu_{\gamma}(\xi,\phi)$  is the number of intersection points of the curve $\gamma$ with the ray $\{z=\xi+re^{i\phi} : r>0\}$.

Logarithmic double layer potential (\ref{log-pot}) is the real part of the Cauchy-type integral
(see, for example, I.I.~Danilyuk \cite{Dan-m}, F.D.~Gakhov \cite{Gah}, N.I.~Muskhelishvili \cite{Mus})
\begin{equation}\label{C-type-int}
\widetilde{g}(z):=
\frac{1}{2\pi i}\int\limits_{\gamma}\frac{g(t)}{t-z}\,dt
 \qquad \forall\,z\in D^{\pm}\,.
\end{equation}

While the results mentioned above about the continuous extension of the logarithmic double layer potential
to the boundary of a domain, which are contained in papers \cite{Plemelj-1911,Radon-46,Kral-1-1964}, are valid for arbitrary continuous functions $g$, the continuous extension of integral (\ref{C-type-int}) to the boundary of $D^{+}$ or $D^{-}$ requires additional assumptions on
the density $g$ of the integral.

The theory of the boundary properties of the integral in (\ref{C-type-int})  is presented in the monographs by F.D.~Gakhov \cite{Gah} and N.I.~Muskhelishvili \cite{Mus}  under the classical assumptions about the smoothness of the integration curve and the H\"older density of the integral.
In the papers of N.A.~Davydov \cite{Davydov}, V.V.~Salaev \cite{Salaev},
T.S.~Salimov \cite{Salimov}, E.M.~Dyn'kin \cite{Dynkin}, O.F.~Gerus \cite{G77,G96-2-4}, the theory of the Cauchy-type integral and the Cauchy singular integral is developed on an arbitrary rectifiable Jordan curve
in classes that are more general than the H\"older class of the integral density,
which are defined, as a rule, in terms of the modulus of continuity of the function $g$.
In this case, for the modulus of continuity of the Cauchy-type integral and its limiting values on the boundary of a domain,
both upper estimates (see \cite{Salaev,G77,Salimov,Dynkin,G96-2-4}) and lower estimates for specially constructed functions to prove the ordinal accuracy of the specified upper estimates on subclasses of the class of closed rectifiable Jordan curves (see, for example, V.V.~Salaev \cite{Salaev} and O.F.~Gerus \cite{G82,G99}) are established.

The aim of this work is to make more exact the mentioned estimates for the modulus of continuity 
of the real part of the Cauchy-type integral (i.e., the logarithmic double layer potential) on subclasses of the class of closed rectifiable Jordan curves.

\vskip 2mm

\section{An upper estimate for the modulus of continuity of the real part of the Cauchy-type integral in the case of an Ahlfors-regular integration curve}
In what follows, a closed rectifiable Jordan curve $\gamma$ satisfies the condition (see V.V.~Salaev \cite{Salaev})
\begin{equation}\label{2-4:nerivnist'-teta}
\theta(\varepsilon):=\sup\limits_{x\in\gamma} \
\theta_{x}(\varepsilon)=O(\varepsilon),\quad
\varepsilon\rightarrow 0\,,
\end{equation}
where $\theta_{x}(\varepsilon):={\rm mes}\,\gamma_{\varepsilon}(x)$,\,\,
$\gamma_{\varepsilon}(x):=\{t\in\gamma :
|t-x|\le \varepsilon\}$ and\, ${\rm mes}$\, denotes the linear Lebesgue measure on $\gamma$.
Curves satisfying condition (\ref{2-4:nerivnist'-teta}) are important in solving various problems
(see, for example, V.V.~Salaev \cite{Salaev}, L.~Ahlfors \cite{Ahlfors2-2-4}, G.~David \cite{David-2-4},
C.~Pommerenke \cite{Pomeren-6-2}, A.~B\"ottcher and Y.I.~Karlovich \cite{Bot-Karl-2-4}).
Such curves are often called {\it regular} (see, for example, \cite{David-2-4}) or {\it Ahlfors-regular}
(see, for example, \cite{Pomeren-6-2}), or {\it Carleson curves} (see, for example, \cite{Bot-Karl-2-4}).

Let $d:=\max\limits_{t_1,t_2\in\gamma} |t_1-t_2|$ be the diameter of the curve $\gamma$.

For each $\xi\in\gamma$,
we define a branch $\arg(z-\xi)$ continuous on $\gamma\setminus\{\xi\}$
of the multivalued function ${\rm Arg}\,(z-\xi)$ in the following way.
For each positive $\delta<d/2$, we select that connected component $\gamma_{\xi,\delta}$ of the set $\gamma_{\delta}(\xi)$ which contains the point $\xi$, and we take such a point $\xi_1\in\gamma_{\xi,\delta}$ at which there is a tangent to $\gamma$ and which does not precede the point $\xi$ under the given orientation of the curve $\gamma$. It is obvious that in the case in which there is a tangent to $\gamma$ at the point $\xi$, we can set $\xi_1=\xi$.
Let us cut the complex plane along the curve $\Gamma_{\xi,\delta}:=\gamma[\xi,\xi_1]\cup\Gamma[\xi_1,\infty]$, where $\gamma[\xi,\xi_1]$ is the arc of $\gamma$ with the initial point $\xi$ and the end point $\xi_1$, and $\Gamma[\xi_1,\infty]$ is a smooth curve that connects the points $\xi_1$ and $\infty$ and lies completely (except for its ends $\xi_1$ and $\infty$) in the domain $D^-$.
Now, let us single out a branch $\arg_{\delta}(z-\xi)$ of the multivalued function ${\rm Arg}\,(z-\xi)$, which is continuous outside the cut
$\Gamma_{\xi,\delta}$ with the normalization condition $\arg_{\delta}(z_0-\xi)=\phi_0$, where $z_0\in D^+$ and $\phi_0$ is one of the values of the function ${\rm Arg}\,(z-\xi)$ at $z=z_0$. We shall use the fixed values $z_0$ and $\phi_0$ for all positive $\delta<d/2$.
As a result, we have the obvious equality
\[\arg_{\delta_1}(t-\xi)=\arg_{\delta}(t-\xi) \qquad \forall\,\delta_1,\delta : 0<\delta_1<\delta<d/2 \quad \forall\,t\in\gamma\setminus\gamma_{\delta}(\xi)\]
that implies the existence of the following limit:
\[\arg(t-\xi):=\lim\limits_{\delta\to 0^+}\,\arg_{\delta}(t-\xi) \qquad \forall\,t\in\gamma\setminus\{\xi\}\,.\]

By definition
\begin{equation}\label{Re-siC}
\int\limits_{\gamma}
 \big(g(t)-g(\xi)\big)\,d\arg(t-\xi):=\lim\limits_{\delta\to 0^+}\,\int\limits_{\gamma\setminus\gamma_{\delta}(\xi)}
 \big(g(t)-g(\xi)\big)\,d\arg(t-\xi) \qquad \forall\,\xi\in\gamma\,,
  \end{equation}
  if the limit on the right-hand side of the equality exists.

Denote
\[M_{\gamma}(g,\varepsilon):=  \sup\limits_{\xi\in\gamma}\,\sup\limits_{\delta\in(0,\varepsilon)}\,\bigg|\,\int\limits_{\gamma_{\varepsilon}(\xi)\setminus\gamma_{\delta}(\xi)}
 \big(g(t)-g(\xi)\big)\,d\arg(t-\xi)\,\bigg|\,, \qquad \varepsilon>0\,.
 \]

It is proved in Theorem 2 \cite{Plaksa-EMJ} that the condition
\begin{equation}\label{nidu-nepr}
M_{\gamma}(g,\varepsilon)\to 0\,, \qquad \varepsilon\to 0\,,
\end{equation}
is necessary and sufficient for the continuous extension to the boundary $\gamma$ of the real part of Cauchy-type integral (\ref{C-type-int}) from the domains $D^+$ and $D^-$ in the case where $\gamma$ is a closed Ahlfors-regular Jordan curve.
Moreover, if condition \eqref{nidu-nepr} is satisfied, then the limit on the right-hand side of equality \eqref{Re-siC} exists, and for all $\xi\in\gamma$, the limiting values
\begin{equation}\label{gran-znach-double-pot}
\big({\rm Re}\,\widetilde{g}\big)^{\pm}(\xi):=\lim\limits_{z\to\xi,\,z\in D^{\pm}}\,{\rm Re}\,\widetilde{g}(z)
\end{equation}
 are represented by the formulas
\begin{equation}\label{double-pot+}
 \big({\rm Re}\,\widetilde{g}\big)^{+}(\xi)=g(\xi)+\frac{1}{2\pi}\,\int\limits_{\gamma}
 \big(g(t)-g(\xi)\big)\,d\arg(t-\xi)\,,
\end{equation}
\begin{equation}\label{double-pot-}
 \big({\rm Re}\,\widetilde{g}\big)^{-}(\xi)=\frac{1}{2\pi}\,\int\limits_{\gamma}
 \big(g(t)-g(\xi)\big)\,d\arg(t-\xi)\,.
\end{equation}

Let us note that in the paper of O.F.~Gerus and M.~Shapiro \cite{Ger-Sh-1}, an analog of the Davydov theorem \cite{Davydov} is proved for an appropriate Cauchy-type integral along an arbitrary rectifiable Jordan curve in $\mathbb{R}^2$, which takes values in the algebra of quaternions.
This result is applied in the paper of O.F.~Gerus and M.~Shapiro \cite{Ger-Sh-2} to establish sufficient conditions for the continuous extension to the boundary of a domain of metaharmonic potentials, a partial case of which is logarithmic double layer potential \eqref{log-pot}.

Continuous extensions of the function ${\rm Re}\,\widetilde{g}$ to the closures  $\overline{D^{+}}$ and $\overline{D^{-}}$ will be denoted by $\big({\rm Re}\,\widetilde{g}\big)^{+}$ and $\big({\rm Re}\,\widetilde{g}\big)^{-}$, respectively.

For a function $f \colon E\rightarrow \mathbb{C}$ continuous on $E\subset \mathbb{C}$, we shall use its modulus of continuity
\[\omega_{E}(f, \varepsilon):=\sup\limits_{t_1, t_2 \in E\,:\, |t_1-t_2|\le \varepsilon}
\left|f(t_1)-f(t_2)\right|. \]

Consider auxiliaries statements.

\vskip 1mm

\begin{lemma}\label{lem-1}
{\em Let a closed Jordan curve $\gamma$ be Ahlfors-regular. Let a function
$g \colon \gamma\rightarrow \mathbb{R}$ be continuous on $\gamma$ and let
condition \eqref{nidu-nepr} be satisfied.
Let $\xi_1, \xi_2\in\gamma$, $|\xi_1-\xi_2|\le\varepsilon_1$ and $\varepsilon/2\le\varepsilon_1\le 5\varepsilon$ for $\varepsilon\in(0,d/8]$.
Then the following estimate is true:
\[ \bigg|\frac{1}{2\pi}\int\limits_{\gamma_{2\varepsilon_1}(\xi_1)} \big(g(t)-g(\xi_2)\big)\,d\arg(t-\xi_2)\bigg|
\le c\,\left(\,M_{\gamma}(g,\varepsilon)+ \varepsilon\,\int\limits_{\varepsilon}^{15\varepsilon}\frac{\omega_{\gamma}(g,\eta)}{\eta^2}\,d\eta \right),\]
where the constant\, $c$\, depends only on\, $\gamma$\, but does not depend on $\varepsilon$.}
\end{lemma}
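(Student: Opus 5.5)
Since $|\xi_1-\xi_2|\le\varepsilon_1$, the point $\xi_2$ lies in $\gamma_{2\varepsilon_1}(\xi_1)$, so the integral in the statement is understood in the principal value sense \eqref{Re-siC} at $\xi_2$. Under condition \eqref{nidu-nepr}, Theorem~2 of \cite{Plaksa-EMJ} guarantees that this limit exists. The plan is to split the domain of integration into a small ball around $\xi_2$, where $M_{\gamma}(g,\varepsilon)$ controls the singular part, and a remainder lying in an annulus $\{\varepsilon/2\le|t-\xi_2|\le 15\varepsilon\}$, which we estimate crudely using Ahlfors-regularity \eqref{2-4:nerivnist'-teta}.

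\emph{Step 1 (the splitting).} The key inclusion is $\gamma_{\varepsilon/2}(\xi_2)\subset\gamma_{2\varepsilon_1}(\xi_1)$. Indeed, if $|t-\xi_2|\le\varepsilon/2\le\varepsilon_1$, then $|t-\xi_1|\le\varepsilon/2+\varepsilon_1\le2\varepsilon_1$. Also $\varepsilon<d/2$, so the definitions make sense.

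For $\delta\in(0,\varepsilon/2)$ we write
\[\int\limits_{\gamma_{2\varepsilon_1}(\xi_1)\setminus\gamma_{\delta}(\xi_2)}
=\int\limits_{\gamma_{\varepsilon}(\xi_2)\setminus\gamma_{\delta}(\xi_2)}
-\int\limits_{\gamma_{\varepsilon}(\xi_2)\setminus\gamma_{\varepsilon/2}(\xi_2)}
+\int\limits_{\gamma_{2\varepsilon_1}(\xi_1)\setminus\gamma_{\varepsilon/2}(\xi_2)},\]
each with integrand $(g(t)-g(\xi_2))\,d\arg(t-\xi_2)$. The first term is bounded by $M_{\gamma}(g,\varepsilon)$ uniformly in $\delta$. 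This avoids any need to compare $M_{\gamma}(g,\varepsilon/2)$ with $M_{\gamma}(g,\varepsilon)$, since $M_\gamma$ is not obviously monotone. Letting $\delta\to0^+$ keeps this bound.

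\emph{Step 2 (the annular terms).} For $t$ in the second or third set we have $\varepsilon/2\le|t-\xi_2|\le\max(\varepsilon,\,2\varepsilon_1+\varepsilon_1)\le15\varepsilon$. On these sets we use $|d\arg(t-\xi_2)|\le|dt|/|t-\xi_2|$ and $|g(t)-g(\xi_2)|\le\omega_\gamma(g,|t-\xi_2|)$. Both terms are therefore bounded by
\[\int\limits_{\{\varepsilon/2\le|t-\xi_2|\le15\varepsilon\}}\frac{\omega_\gamma(g,|t-\xi_2|)}{|t-\xi_2|}\,|dt|
\le\frac{2\,\omega_\gamma(g,15\varepsilon)}{\varepsilon}\,\theta(15\varepsilon)\le c\,\omega_\gamma(g,15\varepsilon),\]
where the last inequality uses $\theta(\eta)\le c\eta$ from \eqref{2-4:nerivnist'-teta}. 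That bound holds for all $\eta\le d$ once one notes $\theta(\eta)\le{\rm mes}\,\gamma$ for large $\eta$, with a constant depending only on $\gamma$. If a sharper form is preferred, one could write the integral as a Stieltjes integral in $\theta_{\xi_2}(\eta)$ and integrate by parts, but the crude bound suffices here.

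\emph{Step 3 (converting to the integral of $\omega$).} This is the only delicate point: we must express $\omega_\gamma(g,15\varepsilon)$ in terms of $\varepsilon\int_\varepsilon^{15\varepsilon}\omega_\gamma(g,\eta)\eta^{-2}d\eta$. Since $\omega_\gamma(g,\cdot)$ is nondecreasing,
\[\varepsilon\int\limits_{\varepsilon}^{15\varepsilon}\frac{\omega_\gamma(g,\eta)}{\eta^2}\,d\eta\ge\omega_\gamma(g,\varepsilon)\,\varepsilon\Big(\frac1\varepsilon-\frac1{15\varepsilon}\Big)=\frac{14}{15}\,\omega_\gamma(g,\varepsilon).\]
Since $\omega_\gamma(g,\cdot)$ is subadditive on $\gamma$ up to a constant depending on $\gamma$, we also have $\omega_\gamma(g,15\varepsilon)\le c\,\omega_\gamma(g,\varepsilon)$. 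On a Jordan curve, semi-additivity of the modulus of continuity holds only up to such a constant; otherwise one can use the standard inequality $\omega(g,n\eta)\le c\,n\,\omega(g,\eta)$ valid for Ahlfors-regular curves.

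Combining Steps 1--3 and dividing by $2\pi$ gives the claimed estimate with $c$ depending only on $\gamma$. I expect the main obstacle to be justifying $\omega_\gamma(g,15\varepsilon)\le c\,\omega_\gamma(g,\varepsilon)$ on a curve rather than a line. If this is problematic, I would instead keep the integrand pointwise: bound $\omega_\gamma(g,|t-\xi_2|)/|t-\xi_2|$ on dyadic shells $\{2^{k}\varepsilon/2\le|t-\xi_2|\le2^{k+1}\varepsilon/2\}$ by $\omega_\gamma(g,2^{k+1}\varepsilon/2)\,\theta(2^{k+1}\varepsilon/2)/(2^k\varepsilon/2)$. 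Each such shell term is dominated by a constant times $\varepsilon\int\omega_\gamma(g,\eta)\eta^{-2}d\eta$ over a comparable interval. The shell nearest $\varepsilon/2$ would need separate handling, again via monotonicity.
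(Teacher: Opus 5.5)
Your Step 1 is sound. Splitting at $\varepsilon/2$ is even slightly more careful than splitting at $\varepsilon$: the inclusion $\gamma_{\varepsilon/2}(\xi_2)\subset\gamma_{2\varepsilon_1}(\xi_1)$ holds for every admissible $\varepsilon_1$, whereas $\gamma_{\varepsilon}(\xi_2)\subset\gamma_{2\varepsilon_1}(\xi_1)$ needs $\varepsilon_1\ge\varepsilon$. Step 3, however, rests on an inequality that is false in this generality.

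The bound $\omega_{\gamma}(g,15\varepsilon)\le c\,\omega_{\gamma}(g,\varepsilon)$ with $c=c(\gamma)$, and likewise $\omega_{\gamma}(g,n\eta)\le c\,n\,\omega_{\gamma}(g,\eta)$, holds on chord-arc curves such as the circle, but not on general Ahlfors-regular curves. Ahlfors-regularity only bounds the length of $\gamma$ in discs from above; it does not prevent Euclidean-close points from being far apart along $\gamma$.

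\emph{A counterexample.} Give $\gamma$, for each $k$, a narrow inlet whose two parallel walls are at distance $w_k$ and have length $L_k$, with $L_k/w_k\to\infty$, $\sum L_k<\infty$, and the inlets suitably spaced. Two walls put length at most about $4r$ into a disc of radius $r$, so $\gamma$ stays Ahlfors-regular. Let $g_k$ increase by $1$ along the $k$-th inlet with arc-length slope of order $1/L_k$, decrease back along a straight piece of $\gamma$ away from the inlets, and be constant elsewhere. Then $\omega_{\gamma}(g_k,w_k)\ge 1$, while $\omega_{\gamma}(g_k,w_k/15)\le C\,w_k/L_k$. So $\omega_\gamma(g,\cdot)$ can jump at a given scale.

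\emph{Consequence for your proof.} As long as you estimate $|g(t)-g(\xi_2)|$ by $\omega_\gamma(g,|t-\xi_2|)$ with $|t-\xi_2|$ up to $3\varepsilon_1$, you are left with $\omega_\gamma(g,3\varepsilon_1)$. This equals $\omega_\gamma(g,15\varepsilon)$ when $\varepsilon_1=5\varepsilon$, which is exactly the case used in Case 3 of Theorem 1. That quantity cannot be dominated by $\varepsilon\int_{\varepsilon}^{15\varepsilon}\omega_{\gamma}(g,\eta)\eta^{-2}\,d\eta$, which does not register a jump of $\omega_\gamma(g,\cdot)$ at $15\varepsilon$. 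Your dyadic fallback breaks for the same reason, and at the outermost shell; the shell near $\varepsilon/2$ is harmless by monotonicity. What your argument does prove, via $\omega_{\gamma}(g,15\varepsilon)\le 30\,\varepsilon\int_{15\varepsilon}^{30\varepsilon}\omega_{\gamma}(g,\eta)\eta^{-2}\,d\eta$, is the estimate with upper limit $30\varepsilon$. That would suffice for the later use, where the integral runs to $2d$, but it is not the lemma as stated.

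The missing idea, which is the paper's, is to compare $g(t)$ with $g(\xi_2)$ through $\xi_1$ on $\gamma_{2\varepsilon_1}(\xi_1)$:
\[
|g(t)-g(\xi_2)|\le|g(t)-g(\xi_1)|+|g(\xi_1)-g(\xi_2)|\le 2\,\omega_{\gamma}(g,2\varepsilon_1).
\]
Combine this with $|t-\xi_2|\ge\varepsilon/2$ and ${\rm mes}\,\gamma_{2\varepsilon_1}(\xi_1)\le\theta(2\varepsilon_1)\le c\,\varepsilon$. This bounds your third term by $c\,\omega_{\gamma}(g,2\varepsilon_1)$. Since $2\varepsilon_1\le 10\varepsilon$ stays strictly below $15\varepsilon$, monotonicity alone finishes:
\[
\omega_{\gamma}(g,2\varepsilon_1)=6\,\omega_{\gamma}(g,2\varepsilon_1)\,\varepsilon_1\int\limits_{2\varepsilon_1}^{3\varepsilon_1}\frac{d\eta}{\eta^2}\le 30\,\varepsilon\int\limits_{\varepsilon}^{15\varepsilon}\frac{\omega_{\gamma}(g,\eta)}{\eta^2}\,d\eta .
\]
Your middle term is at most $c\,\omega_{\gamma}(g,\varepsilon)$, which the first inequality of your Step 3 already handles. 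No doubling property of $\omega_{\gamma}$ is needed, and this is where the upper limit $15\varepsilon=3\cdot 5\varepsilon$ in the statement comes from.
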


\vskip 1mm

\textit{\textbf{Proof.}}
Let us evaluate the modulus of each term on the right-hand side of the equality
\begin{multline*}
\int\limits_{\gamma_{2\varepsilon_1}(\xi_1)} \big(g(t)-g(\xi_2)\big)\,d\arg(t-\xi_2)=\\
=\int\limits_{\gamma_{\varepsilon}(\xi_2)} \big(g(t)-g(\xi_2)\big)\,d\arg(t-\xi_2)+
{\rm Re}\,\bigg(\frac{1}{i}\, \int\limits_{\gamma_{2\varepsilon_1}(\xi_1)\setminus\gamma_{\varepsilon}(\xi_2)}\frac{g(t)-g(\xi_2)}{t-\xi_2}\,dt\bigg)
=: J_1+J_2.
\end{multline*}

It is obvious that
\[ |J_1| \le M_{\gamma}(g,\varepsilon)\,.\]

Taking into account the two-sided inequality $\varepsilon/2\le\varepsilon_1\le 5\varepsilon$ and condition (\ref{2-4:nerivnist'-teta}), we obtain a chain of inequalities
\begin{multline*}
 |J_2|\le
\int\limits_{\gamma_{2\varepsilon_1}(\xi_1)\setminus\gamma_{\varepsilon}(\xi_2)}\frac{|g(t)-g(\xi_1)|+|g(\xi_1)-g(\xi_2)|}{|t-\xi_2|}\,|dt|
\le  \frac{2\omega_{\gamma}(g,2\varepsilon_1)}{\varepsilon}\,\theta_{\xi_1}(2\varepsilon_1)\le \\
\le c\,\omega_{\gamma}(g,2\varepsilon_1)\le c\,\omega_{\gamma}(g,2\varepsilon_1)\,\varepsilon_1\,\int\limits_{2\varepsilon_1}^{3\varepsilon_1}\frac{d\eta}{\eta^2}\le
    c\,\varepsilon\,\int\limits_{\varepsilon}^{15\varepsilon}\frac{\omega_{\gamma}(g,\eta)}{\eta^2}\,d\eta\,,
\end{multline*}
where\, $c$\, denotes different constants whose values depend only on the curve\, $\gamma$.
\hfill $\Box$

\vskip 2mm

\begin{lemma}\label{lem-2}
{\em Under the conditions of Lemma \ref{lem-1} the following estimate is true:
\begin{equation}\label{Zygmund-log-pot-boundary}
\bigg|\big({\rm Re}\,\widetilde{g}\big)^{\pm}(\xi_1)-\big({\rm Re}\,\widetilde{g}\big)^{\pm}(\xi_2)\bigg|
\le c\,\left(\,M_{\gamma}(g,\varepsilon)+ \varepsilon\,\int\limits_{\varepsilon}^{2d}\frac{\omega_{\gamma}(g,\eta)}{\eta^2}\,d\eta \right),
\end{equation}
where the constant\, $c$\, depends only on the curve\, $\gamma$.}
\end{lemma}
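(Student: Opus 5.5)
We use formulas \eqref{double-pot+} and \eqref{double-pot-}. In both cases the difference of the boundary values equals
\[
g(\xi_1)-g(\xi_2)\ \text{(only for the ``$+$'' sign)} \;+\; \frac{1}{2\pi}\left(\int\limits_{\gamma}\big(g(t)-g(\xi_1)\big)\,d\arg(t-\xi_1)-\int\limits_{\gamma}\big(g(t)-g(\xi_2)\big)\,d\arg(t-\xi_2)\right).
\]
The term $|g(\xi_1)-g(\xi_2)|\le\omega_\gamma(g,\varepsilon_1)$ is handled exactly as at the end of the proof of Lemma \ref{lem-1}: $\omega_\gamma(g,\varepsilon_1)\le c\,\varepsilon\int_{\varepsilon}^{15\varepsilon}\omega_\gamma(g,\eta)\eta^{-2}\,d\eta$. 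We split each integral over $\gamma$ into the part over $\gamma_{2\varepsilon_1}(\xi_1)$ and the part over $\gamma\setminus\gamma_{2\varepsilon_1}(\xi_1)$. The near part for $\xi_2$ is bounded directly by Lemma \ref{lem-1}. The near part for $\xi_1$ is bounded by Lemma \ref{lem-1} applied with $\xi_2$ replaced by $\xi_1$, which is admissible since $|\xi_1-\xi_1|=0\le\varepsilon_1$. Both are bounded by the right-hand side of \eqref{Zygmund-log-pot-boundary}, using $\int_\varepsilon^{15\varepsilon}\le\int_\varepsilon^{2d}$ (valid because $15\varepsilon\le 2d$ for $\varepsilon\le d/8$).

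The main work is the far part over $\Gamma:=\gamma\setminus\gamma_{2\varepsilon_1}(\xi_1)$. There, for $k=1,2$, $d\arg(t-\xi_k)={\rm Re}\,\big(\frac{1}{i}\frac{dt}{t-\xi_k}\big)$ and no principal value is involved. We write
\[
\big(g(t)-g(\xi_1)\big)\frac{1}{t-\xi_1}-\big(g(t)-g(\xi_2)\big)\frac{1}{t-\xi_2}
=\big(g(\xi_2)-g(\xi_1)\big)\frac{1}{t-\xi_1}+\big(g(t)-g(\xi_2)\big)\frac{\xi_1-\xi_2}{(t-\xi_1)(t-\xi_2)} .
\]
For the second term, $t\in\Gamma$ gives $|t-\xi_1|>2\varepsilon_1$, hence $|t-\xi_2|\ge|t-\xi_1|/2$ and $|t-\xi_2|\le \tfrac32|t-\xi_1|$. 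So the integrand is at most $c\,\varepsilon_1\,\omega_\gamma(g,\tfrac32|t-\xi_1|)\,|t-\xi_1|^{-2}$. We pass to a Stieltjes integral in $\theta_{\xi_1}(\eta)$ over $\eta\in(2\varepsilon_1,d]$ and integrate by parts, using $\theta_{\xi_1}(\eta)\le c\eta$ from \eqref{2-4:nerivnist'-teta} and the monotonicity of $\omega(\eta)/\eta$ up to constants (or simply dyadic decomposition of $\Gamma$ into annuli $2^k\varepsilon_1<|t-\xi_1|\le 2^{k+1}\varepsilon_1$, each of measure $\le c2^{k}\varepsilon_1$). This yields the bound $c\,\varepsilon\int_\varepsilon^{2d}\omega_\gamma(g,\eta)\eta^{-2}\,d\eta$; the dyadic sum compares to the integral after a change of scale, since $\tfrac32\cdot 2^{k+1}\varepsilon_1\le 2d$ on the relevant range.

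The remaining term is $\big(g(\xi_2)-g(\xi_1)\big)\,{\rm Re}\,\frac1i\int_\Gamma\frac{dt}{t-\xi_1}$, i.e., $(g(\xi_2)-g(\xi_1))$ times the increment of $\arg(t-\xi_1)$ over $\Gamma$. I expect this to be the main obstacle: a crude bound $\int_\Gamma|dt|/|t-\xi_1|\sim\ln(d/\varepsilon)$ would lose a logarithm. I would instead use that $\int_{\gamma\setminus\gamma_{\delta}(\xi_1)}d\arg(t-\xi_1)$ equals the total increment $2\pi$ (or $\pi$-type value via the Plemelj formula with $g\equiv1$, for which \eqref{double-pot+} gives exactly $2\pi$ total) minus the increment over $\gamma_{2\varepsilon_1}(\xi_1)$. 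In other words, apply the already-treated near-part estimate to the constant density. More precisely, I would avoid the issue altogether by regrouping: rather than isolating this term, I would keep $g(t)-g(\xi_1)$ in the $\xi_1$ far integral and compare $\int_\Gamma (g(t)-g(\xi_1))\,d\arg(t-\xi_1)$ with $\int_\Gamma (g(t)-g(\xi_1))\,d\arg(t-\xi_2)$. Their difference is exactly the kernel-difference term above, with $g(\xi_2)$ replaced by $g(\xi_1)$ and $\omega$ evaluated at $|t-\xi_1|$. The leftover $(g(\xi_1)-g(\xi_2))\int_\Gamma d\arg(t-\xi_2)$ is then bounded by $\omega_\gamma(g,\varepsilon_1)$ times the increment of $\arg(t-\xi_2)$ over the arc-complement. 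This increment is a bounded quantity: it equals $2\pi$ minus the increment over $\gamma_{2\varepsilon_1}(\xi_1)$, and the latter is bounded by Lemma \ref{lem-1} applied to a suitable bounded Lipschitz density. Alternatively, via the Ahlfors-regular boundedness of $M_\gamma$ for a Lipschitz density equal to $1$ off a neighbourhood, one uses the fact that the total variation of argument along a union of boundedly many subarcs is controlled. Combining all pieces gives \eqref{Zygmund-log-pot-boundary}.
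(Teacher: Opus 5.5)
Your proposal has a genuine gap. After your regrouping, your decomposition is exactly the paper's. The two near parts go by Lemma~\ref{lem-1}, including its use with $\xi_2$ replaced by $\xi_1$. The kernel-difference term with $(\xi_1-\xi_2)/\bigl((t-\xi_1)(t-\xi_2)\bigr)$ goes through $\theta_{\xi_1}(\eta)\le c\,\eta$. What remains is $\frac{g(\xi_2)-g(\xi_1)}{2\pi}\int_{\gamma\setminus\gamma_{2\varepsilon_1}(\xi_1)}d\arg(t-\xi_2)$.

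The gap is the step you yourself flag as the main obstacle. You need the net increment $\int_{\gamma\setminus\gamma_{2\varepsilon_1}(\xi_1)}d\arg(t-\xi_2)$ to be bounded by a constant independent of $\varepsilon_1,\xi_1,\xi_2$, and none of your justifications proves this:

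\begin{itemize}
\item Formula \eqref{double-pot+} with $g\equiv 1$ is vacuous, because its integrand $g(t)-g(\xi)$ vanishes identically. It does not give $\int_\gamma d\arg(t-\xi_2)=2\pi$. For $\xi_2\in\gamma$ the principal-value increment is $\pi$, not $2\pi$, at a point where $\gamma$ has a tangent, and on a general Ahlfors-regular curve it need not exist at all.
\item Lemma~\ref{lem-1} only controls integrals whose density $h(t)-h(\xi_2)$ vanishes at $\xi_2$, so it cannot produce the bare increment of $\arg(t-\xi_2)$ over $\gamma_{2\varepsilon_1}(\xi_1)$. If you localize with a Lipschitz cut-off $h$, you must then bound $(\mathrm{Re}\,\widetilde h)^+(\xi_2)$. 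Expanding it by \eqref{double-pot+} brings back a term $h(\xi_2)\int d\arg(t-\xi_2)$ over the part of $\gamma$ where $h$ vanishes, which is the same kind of quantity, so the argument is circular.
\item No bound on the total variation of the argument is available. Condition \eqref{2-4:nerivnist'-teta} allows angular excursions of fixed size at every dyadic scale around $\xi_2$. Hence $\int_{\gamma\setminus\gamma_{2\varepsilon_1}(\xi_1)}|d\arg(t-\xi_2)|$ can be of order $\ln(d/\varepsilon)$, exactly the logarithm you wanted to avoid, and the set $\gamma\setminus\gamma_{2\varepsilon_1}(\xi_1)$ may consist of infinitely many arcs. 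Uniform boundedness of this variation is the Kr\'al condition \eqref{obm-var-arg}, which is not assumed in this lemma.
\end{itemize}

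What is needed is a cancellation statement, which the paper takes from Salaev: $\bigl|\int_{\gamma\setminus\gamma_{2\varepsilon_1}(\xi_1)}\frac{dt}{t-\xi_2}\bigr|\le 4\pi$. It follows from Cauchy's theorem. Since $|\xi_2-\xi_1|\le\varepsilon_1$, the function $(t-\xi_2)^{-1}$ is holomorphic in $D^+\setminus\{t:|t-\xi_1|\le 2\varepsilon_1\}$. So the integral over the arcs of $\gamma$ outside this disc equals, up to sign, an integral over part of the circle $|t-\xi_1|=2\varepsilon_1$. On that circle $|t-\xi_2|\ge\varepsilon_1$, and its length is $4\pi\varepsilon_1$.

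Taking imaginary parts, your leftover term is at most $2\,\omega_\gamma(g,\varepsilon_1)$, which is then absorbed as you describe. The same argument with the centre $\xi_1$ in place of $\xi_2$ gives the bound $2\pi$, so with this tool even your first, non-regrouped decomposition would have worked. Without it, the proof is incomplete at its key step.
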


\vskip 1mm

\textit{\textbf{Proof.}}
Let us prove estimate (\ref{Zygmund-log-pot-boundary}) for the function
$\big({\rm Re}\,\widetilde{g}\big)^{+}$ (estimate (\ref{Zygmund-log-pot-boundary}) for the function $\big({\rm Re}\,\widetilde{g}\big)^{-}$ is similarly proved).

We take into account equality (\ref{double-pot+}) and use the following representation of the difference:
\begin{multline*}
\big({\rm Re}\,\widetilde{g}\big)^{+}(\xi_1)-\big({\rm Re}\,\widetilde{g}\big)^{+}(\xi_2)=g(\xi_1)-g(\xi_2)+\\[2mm]
+\frac{1}{2\pi}\,\int\limits_{\gamma}
 \big(g(t)-g(\xi_1)\big)\,d\arg(t-\xi_1)-\frac{1}{2\pi}\,\int\limits_{\gamma}
 \big(g(t)-g(\xi_2)\big)\,d\arg(t-\xi_2)=\\
=g(\xi_1)-g(\xi_2)+\frac{1}{2\pi}\,\int\limits_{\gamma_{2\varepsilon_1}(\xi_1)}\big(g(t)-g(\xi_1)\big)\,d\arg(t-\xi_1)-
\frac{1}{2\pi}\,\int\limits_{\gamma_{2\varepsilon_1}(\xi_1)}\big(g(t)-g(\xi_2)\big)\,d\arg(t-\xi_2)+\\
+{\rm Re}\,\bigg(\frac{\xi_1-\xi_2}{2\pi i}\, \int\limits_{\gamma\setminus\gamma_{2\varepsilon_1}(\xi_1)}\frac{g(t)-g(\xi_1)}{(t-\xi_1)(t-\xi_2)}\,dt\bigg)+{\rm Re}\,\bigg(\frac{g(\xi_2)-g(\xi_1)}{2\pi i}\, \int\limits_{\gamma\setminus\gamma_{2\varepsilon_1}(\xi_1)}\frac{dt}{t-\xi_2}\bigg)=:\\[2mm]
 =:g(\xi_1)-g(\xi_2)+I_1-I_2+I_3+I_4\,.
\end{multline*}

The moduli of integrals $I_1$ and $I_2$ are estimated in Lemma \ref{lem-1}.

To estimate the modulus of the term $I_3$, we use the inequality $|t-\xi_2|\ge |t-\xi_1|/2$ for all $t\in\gamma\setminus\gamma_{2\varepsilon_1}(\xi_1)$,
Proposition 7.2 \cite{Pla-Shpak-mono} (see also the proof of Theorem 1 in the paper of O.F.~Gerus \cite{G78}) and condition
(\ref{2-4:nerivnist'-teta}) so that we have
\begin{multline*}
|I_3|\le \frac{|\xi_1-\xi_2|}{\pi}\,\int\limits_{\gamma\setminus\gamma_{2\varepsilon_1}(\xi_1)}\frac{|g(t)-g(\xi_1)|}{|t-\xi_1|^2}\,|dt|\le
\frac{\varepsilon_1}{\pi}\int\limits_{[2\varepsilon_1,d]}\frac{\omega_{\gamma}(g,\eta)}{\eta^2}\,d\theta_{\xi_1}(\eta)\le\\
\leq \frac{2\varepsilon_1}{3\pi}\,\int\limits_{\varepsilon_1}^{d}\frac{\theta_{\xi_1}(2\eta)\omega_{\gamma}(g,2\eta)}{\eta^3}\,d\eta\leq
c\,\varepsilon\int\limits_{\varepsilon}^{2d}\frac{\omega_{\gamma}(g,\eta)}{\eta^2}\,d\eta\,,
\end{multline*}
where the constant\, $c$\, depends only on the curve $\gamma$.

In addition, taking into account the inequality (see the proof of Theorem 1 in the paper of V.V.~Salaev \cite{Salaev})
\[ \bigg|\, \int\limits_{\gamma\setminus\gamma_{2\varepsilon_1}(\xi_1)}\frac{dt}{t-\xi_2}\,\bigg|\le 4\pi\,,\]
we obtain the following relations:
\begin{multline*}
\big|g(\xi_1)-g(\xi_2)+I_4\big|\le \big|g(\xi_2)-g(\xi_1)\big|+\frac{\big|g(\xi_2)-g(\xi_1)\big|}{2\pi}\,\bigg|\, \int\limits_{\gamma\setminus\gamma_{2\varepsilon_1}(\xi_1)}\frac{dt}{t-\xi_2}\,\bigg|\le
3\,\omega_{\gamma}(g,\varepsilon)=\\
= 6\,\omega_{\gamma}(g,\varepsilon) \,\varepsilon\,\int\limits_{\varepsilon}^{2\varepsilon}\frac{d\eta}{\eta^2}\le
    6\,\varepsilon\,\int\limits_{\varepsilon}^{2d}\frac{\omega_{\gamma}(g,\eta)}{\eta^2}\,d\eta\,.
\end{multline*}
\hfill $\Box$

\vskip 1mm

\begin{lemma}\label{lem-3}
{\em Let a closed Jordan curve $\gamma$ be Ahlfors-regular. Let a function
$g \colon \gamma\rightarrow \mathbb{R}$ be continuous on $\gamma$ and let
condition \eqref{nidu-nepr} be satisfied.
Let $z\in D^{\pm}$ and let $\xi_z$ be one of the closest points of the curve $\gamma$ to the point $z$, and let
$|z-\xi_z|\le 2\varepsilon$ with $\varepsilon\in(0,d/8]$.
Then the following estimate is true:
\[ \bigg|\big({\rm Re}\,\widetilde{g}\big)^{\pm}(z)-\big({\rm Re}\,\widetilde{g}\big)^{\pm}(\xi_z)\bigg|
\le c\,\left(\,M_{\gamma}(g,\varepsilon)+ \varepsilon\,\int\limits_{\varepsilon}^{2d}\frac{\omega_{\gamma}(g,\eta)}{\eta^2}\,d\eta \right),\]
where the constant\, $c$\, depends only on the curve\, $\gamma$.}
\end{lemma}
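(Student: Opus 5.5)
We treat the case $z\in D^{+}$; the case $z\in D^{-}$ is handled the same way, using \eqref{double-pot-} in place of \eqref{double-pot+}. Write $\xi:=\xi_z$ and $r:=|z-\xi|\le 2\varepsilon$. For $z\in D^+$ we have $\frac{1}{2\pi i}\int_\gamma\frac{dt}{t-z}=1$, and $\frac{1}{2\pi i}\int_\gamma\frac{dt}{t-z}$ has real part $\frac{1}{2\pi}\int_\gamma d\arg(t-z)$. Hence
\[
{\rm Re}\,\widetilde g(z)=g(\xi)+{\rm Re}\,\frac{1}{2\pi i}\int_\gamma\frac{g(t)-g(\xi)}{t-z}\,dt .
\]
Combining this with \eqref{double-pot+}, we get
\[
({\rm Re}\,\widetilde g)^+(z)-({\rm Re}\,\widetilde g)^+(\xi)=\frac{1}{2\pi}\,{\rm Re}\,\frac1i\int_\gamma\big(g(t)-g(\xi)\big)\Big(\frac{1}{t-z}-\frac{1}{t-\xi}\Big)dt,
\]
where the integral against $1/(t-\xi)$ is understood in the principal sense of \eqref{Re-siC}.

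Split $\gamma$ into $\gamma_{\rho}(\xi)$ and $\gamma\setminus\gamma_\rho(\xi)$, with $\rho:=2\varepsilon$ (so that $\rho\ge r$, and also $\rho\ge 2r$ when $r\le\varepsilon$). On the far part we have $|t-z|\ge |t-\xi|-r$. To keep $|t-z|\ge|t-\xi|/2$ there, we instead take $\rho:=4\varepsilon\ge 2r$. The far term is then
\[
{\rm Re}\,\frac{z-\xi}{2\pi i}\int_{\gamma\setminus\gamma_{\rho}(\xi)}\frac{g(t)-g(\xi)}{(t-z)(t-\xi)}\,dt .
\]
It is bounded exactly as $I_3$ in Lemma~\ref{lem-2}, via Proposition 7.2 of \cite{Pla-Shpak-mono} and \eqref{2-4:nerivnist'-teta}, by $c\,\varepsilon\int_\varepsilon^{2d}\omega_\gamma(g,\eta)\eta^{-2}\,d\eta$.

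The near part involving $\xi$, namely $\frac1{2\pi}\int_{\gamma_{\rho}(\xi)}(g(t)-g(\xi))\,d\arg(t-\xi)$, is handled by Lemma~\ref{lem-1} with $\xi_1=\xi_2=\xi$ and $\varepsilon_1=2\varepsilon$, which gives the bound $c(M_\gamma(g,\varepsilon)+\varepsilon\int_\varepsilon^{15\varepsilon}\omega_\gamma(g,\eta)\eta^{-2}\,d\eta)$.

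The main obstacle is the near part involving $z$, namely
\[
\frac{1}{2\pi}\,{\rm Re}\,\frac1i\int_{\gamma_\rho(\xi)}\frac{g(t)-g(\xi)}{t-z}\,dt .
\]
The cancellation that $M_\gamma$ captures is with respect to $\xi$, not $z$. Since $\xi$ is a closest point, $|t-z|\ge r$ and also $|t-z|\ge |t-\xi|/2$ on all of $\gamma$ (because $|t-\xi|\le|t-z|+r\le 2|t-z|$). I would split further at the scale $r$.

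On $\gamma_{\rho}(\xi)\setminus\gamma_{2r}(\xi)$, compare $1/(t-z)$ with $1/(t-\xi)$. The difference is $(z-\xi)/((t-z)(t-\xi))$, which is bounded by $2r/|t-\xi|^2$. This error contributes at most $c\,r\int_{[2r,\rho]}\omega_\gamma(g,\eta)\eta^{-2}\,d\theta_\xi(\eta)$. The main part there is $\int_{\gamma_\rho(\xi)\setminus\gamma_{2r}(\xi)}(g(t)-g(\xi))\,d\arg(t-\xi)$, which is bounded by $M_\gamma(g,\rho)$.

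On $\gamma_{2r}(\xi)$, the bound $|t-z|\ge r$ together with \eqref{2-4:nerivnist'-teta} gives at most $c\,\omega_\gamma(g,2r)\le c\,\omega_\gamma(g,4\varepsilon)$.

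The remaining difficulty is to turn these into the stated right-hand side. The term $\omega_\gamma(g,4\varepsilon)$ is dominated by $c\,\varepsilon\int_{\varepsilon}^{15\varepsilon}\omega_\gamma(g,\eta)\eta^{-2}\,d\eta$, by the same trick as in Lemma~\ref{lem-1}. For the term $r\int_{[2r,\rho]}\omega_\gamma(g,\eta)\eta^{-2}\,d\theta_\xi(\eta)$, integration by parts and monotonicity of $\omega/\eta$ up to a factor $2$ give the bound $c\,\omega_\gamma(g,\rho)$ or $c\,\varepsilon\int_{\varepsilon}\cdots$.

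The delicate point is $M_\gamma(g,\rho)$ with $\rho=4\varepsilon$ versus $M_\gamma(g,\varepsilon)$. I would handle it by writing $\gamma_{4\varepsilon}\setminus\gamma_\delta=(\gamma_{\varepsilon}\setminus\gamma_\delta)\cup(\gamma_{4\varepsilon}\setminus\gamma_\varepsilon)$. The annulus $\gamma_{4\varepsilon}\setminus\gamma_\varepsilon$ is bounded by $\omega_\gamma(g,4\varepsilon)\,\theta(4\varepsilon)/\varepsilon\le c\,\omega_\gamma(g,4\varepsilon)$. This yields $M_\gamma(g,4\varepsilon)\le M_\gamma(g,\varepsilon)+c\,\varepsilon\int_\varepsilon^{15\varepsilon}\omega_\gamma(g,\eta)\eta^{-2}\,d\eta$, completing the estimate.
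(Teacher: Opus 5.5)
Your argument is correct and, after regrouping, it is the paper's proof. Your three pieces (over $\gamma_{2r}(\xi)$, over the annulus $\gamma_{4\varepsilon}(\xi)\setminus\gamma_{2r}(\xi)$, and over $\gamma\setminus\gamma_{4\varepsilon}(\xi)$) add up exactly to the paper's $S_1-S_2+S_3$, which is split at the adaptive radius $2|z-\xi_z|$. The reason is that your annular $d\arg(t-\xi)$-integral cancels the annular part of the term you send to Lemma~\ref{lem-1}.

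The only real difference is how $S_2$ is controlled. The paper bounds it by $M_{\gamma}(g,\varepsilon)$ when $|z-\xi_z|\le\varepsilon/2$ and by Lemma~\ref{lem-1} otherwise. Your fixed radius $4\varepsilon$ replaces this case distinction by the estimate $M_{\gamma}(g,4\varepsilon)\le M_{\gamma}(g,\varepsilon)+c\,\omega_{\gamma}(g,4\varepsilon)$, which you prove correctly.

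One justification should be replaced. On a general Ahlfors-regular curve, $\omega_{\gamma}(g,\eta)/\eta$ need not be almost decreasing; it can fail near an inward cusp, for instance. You do not need that property, because monotonicity of $\omega_{\gamma}$ and \eqref{2-4:nerivnist'-teta} alone give
$r\int_{[2r,4\varepsilon]}\omega_{\gamma}(g,\eta)\eta^{-2}\,d\theta_{\xi}(\eta)\le r\,\omega_{\gamma}(g,4\varepsilon)\int_{[2r,\infty)}\eta^{-2}\,d\theta_{\xi}(\eta)\le c\,\omega_{\gamma}(g,4\varepsilon)$.
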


\vskip 1mm

\textit{\textbf{Proof.}}
Denote $\rho:=|z-\xi_z|$.

Taking into account the Cauchy integral theorem, the Cauchy integral formula, equalities (\ref{double-pot+}) and (\ref{double-pot-}), we have
 \begin{multline*}
 \big({\rm Re}\,\widetilde{g}\big)^{\pm}(z)-\big({\rm Re}\,\widetilde{g}\big)^{\pm}(\xi_z)=
 {\rm Re}\,\bigg(\frac{1}{2\pi i}\,\int\limits_{\gamma}\frac{g(t)-g(\xi_z)}{t-z}\,dt\bigg)-\frac{1}{2\pi}\,\int\limits_{\gamma}
 \big(g(t)-g(\xi_z)\big)\,d\arg(t-\xi_z)=\\
 ={\rm Re}\,\bigg(\frac{1}{2\pi i}\,\int\limits_{\gamma_{2\rho}(\xi_z)}\frac{g(t)-g(\xi_z)}{t-z}\,dt\bigg)- \frac{1}{2\pi}\,\int\limits_{\gamma_{2\rho}(\xi_z)}\big(g(t)-g(\xi_z)\big)\,d\arg(t-\xi_z)+\\
 +{\rm Re}\,\bigg(\frac{z-\xi_z}{2\pi i}\,\int\limits_{\gamma\setminus\gamma_{2\rho}(\xi_z)}\frac{g(t)-g(\xi_z)}{(t-z)(t-\xi_z)}\,dt\bigg)
=:S_1-S_2+S_3\,.
\end{multline*}

Taking into account condition (\ref{2-4:nerivnist'-teta}), we obtain the inequalities
\begin{multline*}
|S_1|\le \frac{1}{2\pi}\,\int\limits_{\gamma_{2\rho}(\xi_z)}\frac{|g(t)-g(\xi_z)|}{|t-z|}\,|dt|\le
 \frac{\omega_{\gamma}(g,2\rho)}{2\pi\rho}\,\theta_{\xi_z}(2\rho)\le c\,\omega_{\gamma}(g,2\rho)\le c\,\omega_{\gamma}(g,4\varepsilon)\le\\
 \le c\,\omega_{\gamma}(g,4\varepsilon) \,\varepsilon\,\int\limits_{4\varepsilon}^{5\varepsilon}\frac{d\eta}{\eta^2}\le
    c\,\varepsilon\,\int\limits_{4\varepsilon}^{5\varepsilon}\frac{\omega_{\gamma}(g,\eta)}{\eta^2}\,d\eta\,,
\end{multline*}
where\, $c$\, denotes different constants whose values depend only on the curve\, $\gamma$.

It is obvious that for $\rho\le\varepsilon/2$, the following inequality holds:
\[ |S_2| \le \frac{1}{2\pi}\, M_{\gamma}(g,\varepsilon)\,,\]
and for $\varepsilon/2<\rho\le 2\varepsilon$, the modulus of integral $S_2$ is estimated in Lemma \ref{lem-1}.

Just as when estimating $|I_3|$ in the proof of Lemma \ref{lem-2}, we first obtain
\[|S_3|\le c\,\rho\int\limits_{\rho}^{2d}\frac{\omega_{\gamma}(g,\eta)}{\eta^2}\,d\eta\,,\]
where the constant\, $c$\, depends only on the curve\, $\gamma$.

Further, we have the relation
\begin{multline*}
\rho\int\limits_{\rho}^{2d}\frac{\omega_{\gamma}(g,\eta)}{\eta^2}\,d\eta=
\rho\int\limits_{\rho}^{2\varepsilon}\frac{\omega_{\gamma}(g,\eta)}{\eta^2}\,d\eta+
\rho\int\limits_{2\varepsilon}^{2d}\frac{\omega_{\gamma}(g,\eta)}{\eta^2}\,d\eta \le \omega_{\gamma}(g,2\varepsilon)+2\,\varepsilon\int\limits_{2\varepsilon}^{2d}\frac{\omega_{\gamma}(g,\eta)}{\eta^2}\,d\eta=\\
=6\,\omega_{\gamma}(g,2\varepsilon) \,\varepsilon\,\int\limits_{2\varepsilon}^{3\varepsilon}\frac{d\eta}{\eta^2}+ 2\,\varepsilon\int\limits_{2\varepsilon}^{2d}\frac{\omega_{\gamma}(g,\eta)}{\eta^2}\,d\eta\le
8\,\varepsilon\int\limits_{2\varepsilon}^{2d}\frac{\omega_{\gamma}(g,\eta)}{\eta^2}\,d\eta\,,
\end{multline*}
\hfill $\Box$

\vskip 1mm

\begin{theorem}\label{theor-oz-cr-Al}
{\em Let a closed Jordan curve $\gamma$ be Ahlfors-regular and let a function
$g \colon \gamma\rightarrow \mathbb{R}$ be continuous on $\gamma$.
If condition \eqref{nidu-nepr} is satisfied, then the following estimate is true:
 \begin{equation}\label{Zygmund-log-pot}
\omega\,_{\overline{D^{\pm}}}\Big(\big({\rm Re}\,\widetilde{g}\big)^{\pm},\varepsilon\Big)\le\, c\,\left(\,M_{\gamma}(g,\varepsilon)+ \varepsilon\,\int\limits_{\varepsilon}^{2d}\frac{\omega_{\gamma}(g,\eta)}{\eta^2}\,d\eta \right) \qquad \forall\,\varepsilon\in(0,d/8]\,,
\end{equation}
where the constant\, $c$\, depends only on\, $\gamma$\, but does not depend on $\varepsilon$. }
\end{theorem}

\vskip 1mm

\textit{\textbf{Proof.}}
Let us prove estimate (\ref{Zygmund-log-pot}) for $\omega\,_{\overline{D^{+}}}\Big(\big({\rm Re}\,\widetilde{g}\big)^{+},\varepsilon\Big)$.

Let\, $0<\varepsilon\le d/8$\,.

Consider four cases of arrangement of the points $z_1, z_2\in \overline{D^+}$\,.
	
	{\em Case 1.} Let $z_1, z_2\in\gamma$ and $|z_1-z_2|\le\varepsilon$. In this case, the estimate proved in Lemma \ref{lem-2} is valid with
$\xi_1=z_1$ and $\xi_2=z_2$.

	{\em Case 2.} Let $z_1\in\gamma$, $z_2\in D^+$ and $|z_1-z_2|\le\varepsilon$.
Let $\xi_{z_2}$ be one of the closest points of the curve $\gamma$ to the point $z_2$. Then we have the equality
\[\big({\rm Re}\,\widetilde{g}\big)^{+}(z_2)-\big({\rm Re}\,\widetilde{g}\big)^{+}(z_1)=
\big({\rm Re}\,\widetilde{g}\big)^{+}(z_2)-\big({\rm Re}\,\widetilde{g}\big)^{+}(\xi_{z_2})+\big({\rm Re}\,\widetilde{g}\big)^{+}(\xi_{z_2})-\big({\rm Re}\,\widetilde{g}\big)^{+}(z_1)\,.\]

Inasmuch as $|z_2-\xi_{z_2}|\le\varepsilon$ and $|z_1-\xi_{z_2}|\le 2\varepsilon$, then $\Big|\big({\rm Re}\,\widetilde{g}\big)^{+}(z_2)-\big({\rm Re}\,\widetilde{g}\big)^{+}(\xi_{z_2})\Big|$ is estimated in Lemma~\ref{lem-3} and $\Big|\big({\rm Re}\,\widetilde{g}\big)^{+}(\xi_{z_2})-\big({\rm Re}\,\widetilde{g}\big)^{+}(z_1)\Big|$ is estimated in Lemma \ref{lem-2}.
As a result, we obtain an estimate of form
(\ref{Zygmund-log-pot-boundary}) with $\xi_1=z_1$ and $\xi_2=z_2$.

	{\em Case 3.} Let $z_1, z_2\in D^+$, $|z_1-z_2|\le\varepsilon$, $\min\limits_{t\in\gamma}|t-z_1|\le 2\varepsilon$ and $\min\limits_{t\in\gamma}|t-z_2|\le 2\varepsilon$.
Let $\xi_{z_k}$ be one of the closest points of the curve $\gamma$ to the point $z_k$ for $k=1,2$. Then we have the equality
\begin{multline*}
\big({\rm Re}\,\widetilde{g}\big)^{+}(z_1)-\big({\rm Re}\,\widetilde{g}\big)^{+}(z_2)=
\big({\rm Re}\,\widetilde{g}\big)^{+}(z_1)-\big({\rm Re}\,\widetilde{g}\big)^{+}(\xi_{z_1})+\\[2mm]
+\big({\rm Re}\,\widetilde{g}\big)^{+}(\xi_{z_1})-\big({\rm Re}\,\widetilde{g}\big)^{+}(\xi_{z_2})
+\big({\rm Re}\,\widetilde{g}\big)^{+}(\xi_{z_2})-\big({\rm Re}\,\widetilde{g}\big)^{+}(z_2)\,.
\end{multline*}

Inasmuch as $|z_1-\xi_{z_1}|\le 2\varepsilon$, $|z_2-\xi_{z_2}|\le 2\varepsilon$ and $|\xi_{z_1}-\xi_{z_2}|\le 5\varepsilon$, then
$\Big|\big({\rm Re}\,\widetilde{g}\big)^{+}(z_1)-\big({\rm Re}\,\widetilde{g}\big)^{+}(\xi_{z_1})\Big|$ and
$\Big|\big({\rm Re}\,\widetilde{g}\big)^{+}(z_2)-\big({\rm Re}\,\widetilde{g}\big)^{+}(\xi_{z_2})\Big|$ is estimated in Lemma~\ref{lem-3}, and $\Big|\big({\rm Re}\,\widetilde{g}\big)^{+}(\xi_{z_1})-\big({\rm Re}\,\widetilde{g}\big)^{+}(\xi_{z_2})\Big|$ is estimated in Lemma \ref{lem-2}. As a result, we obtain an estimate of form (\ref{Zygmund-log-pot-boundary}) with $\xi_1=z_1$ and $\xi_2=z_2$.

	{\em Case 4.} Let $z_1, z_2\in D^+$, $|z_1-z_2|\le\varepsilon$ and $\min\limits_{t\in\gamma}|t-z_2|> 2\varepsilon$. Then
$\min\limits_{t\in\gamma}|t-z_1|> \varepsilon$.

Let $\xi_{z_1}$ be one of the closest points of the curve $\gamma$ to the point $z_1$.
We use the following representation of the difference:
\begin{multline*}
\big({\rm Re}\,\widetilde{g}\big)^{+}(z_1)-\big({\rm Re}\,\widetilde{g}\big)^{+}(z_2)=
{\rm Re}\,\bigg(\frac{z_1-z_2}{2\pi i}\,
\int\limits_{\gamma}\frac{g(t)-g(\xi_{z_1})}{(t-z_1)(t-z_2)}\,dt\bigg)=\\
={\rm Re}\,\bigg(\frac{z_1-z_2}{2\pi i}\,
\int\limits_{\gamma_{\varepsilon}(\xi_{z_1})}\frac{g(t)-g(\xi_{z_1})}{(t-z_1)(t-z_2)}\,dt\bigg)+
{\rm Re}\,\bigg(\frac{z_1-z_2}{2\pi i}\,
\int\limits_{\gamma\setminus\gamma_{\varepsilon}(\xi_{z_1})}\frac{g(t)-g(\xi_{z_1})}{(t-z_1)(t-z_2)}\,dt\bigg)=:s_1+s_2.
\end{multline*}

Similarly to the estimate of $|S_1|$ in the proof of Lemma \ref{lem-3}, we obtain the estimate
\[|s_1|\le c\,\varepsilon\int\limits_{\varepsilon}^{2\varepsilon}\frac{\omega_{\gamma}(g,\eta)}{\eta^2}\,d\eta\,,\]
where the constant\, $c$\, depends only on the curve\, $\gamma$.

In addition, taking into account the inequalities $|t-z_1|\ge |t-\xi_{z_1}|/2$ and $|t-z_2|\ge |t-\xi_{z_1}|/3$,
which are fulfilled in this case for all $t\in\gamma$,
similarly to the estimate of $|I_3|$ in the proof of Lemma \ref{lem-2}, we obtain the estimate
\[|s_2|\le c\,\varepsilon\int\limits_{\varepsilon}^{2d}\frac{\omega_{\gamma}(g,\eta)}{\eta^2}\,d\eta\,,\]
where the constant\, $c$\, depends only on the curve\, $\gamma$.

An obvious corollary of the mentioned estimates for  $|s_1|$ and $|s_2|$ is the inequality
\[ \Big|\big({\rm Re}\,\widetilde{g}\big)^{+}(z_1)-\big({\rm Re}\,\widetilde{g}\big)^{+}(z_2)\Big|\le
 c\,\varepsilon\int\limits_{\varepsilon}^{2d}\frac{\omega_{\gamma}(g,\eta)}{\eta^2}\,d\eta\,,\]
where the constant\, $c$\, depends only on the curve\, $\gamma$.

Since all possible cases of arrangement of the points $z_1, z_2\in \overline{D^+}$ are considered,
the proof of estimate (\ref{Zygmund-log-pot}) for $\omega\,_{\overline{D^{+}}}\Big(\big({\rm Re}\,\widetilde{g}\big)^{+},\varepsilon\Big)$
is complete.
Estimate (\ref{Zygmund-log-pot}) for $\omega\,_{\overline{D^{-}}}\Big(\big({\rm Re}\,\widetilde{g}\big)^{-},\varepsilon\Big)$ is similarly proved.
\hfill $\Box$

\vskip 2mm

Let us note that if the modulus of continuity of the function\, $g : \gamma\rightarrow\mathbb{R}$\,,
which is given on a closed Ahlfors-regular Jordan curve,
satisfies the Dini condition
 \begin{equation}\label{2-4:Dini}
\int\limits_0^1\frac{\omega_{\gamma}(g,\eta)}{\eta}\,d\eta<\infty,
\end{equation}
then Cauchy-type integral (\ref{C-type-int}) has the limiting values $\widetilde{g}\,^{\pm}(\xi)$\, at every point\, $\xi\in\gamma$\, from the domain\, $D^{\pm}$, and the following Zygmund estimate holds:
 \begin{equation}\label{Zygmund-2}
\omega_{\overline{D^{\pm}}}(\widetilde g,\varepsilon)\le\, c\,\left(\,\int\limits_{0}^{\varepsilon}\frac{\omega_{\gamma}(g,\eta)}{\eta}\,d\eta+ \varepsilon\,\int\limits_{\varepsilon}^{2d}\frac{\omega_{\gamma}(g,\eta)}{\eta^2}\,d\eta \right)\,,
\end{equation}
where the constant\, $c$\, depends only on\, $\gamma$\, but does not depend on $\varepsilon$
(see O.F.~Gerus \cite{G77}, and also V.V.~Salaev \cite{Salaev}, where the Dini
condition of form (\ref{2-4:Dini}) is given in terms of the regularized modulus of continuity using
the Stechkin construction).

Condition (\ref{2-4:Dini}) is sufficient for condition \eqref{nidu-nepr} to be satisfied in the case where $\gamma$ is an Ahlfors-regular curve, but it is not necessary for this. Indeed, if condition \eqref{nidu-nepr} is satisfied, but the condition
\[\sup\limits_{\xi\in\gamma}\,\sup\limits_{\delta\in(0,\varepsilon)}\,\bigg|\,\int\limits_{\gamma_{\varepsilon}(\xi)\setminus\gamma_{\delta}(\xi)}
 \frac{g(t)-g(\xi)}{|t-\xi|}\,d|t-\xi|\,\bigg|\to 0\,, \qquad \varepsilon\to 0\,,\]
is not satisfied, then condition (\ref{2-4:Dini}) is also not satisfied, and the function ${\rm Im}\,\widetilde{g}(z)$ have no continuous extension to the boundary $\gamma$ from the domains $D^+$ and $D^-$, while the function ${\rm Re}\,\widetilde{g}(z)$ is continuously extended to $\gamma$ from $D^+$ and $D^-$. Thus, estimate (\ref{Zygmund-log-pot}) is an improvement of estimate (\ref{Zygmund-2}) for the logarithmic double layer potential in the case where an integration curve is Ahlfors-regular.

It will be proved further that the estimate (\ref{Zygmund-log-pot}) is exact with respect to the order of smallness as $\varepsilon\to 0$.

\vskip 2mm

\section{Estimates for the modulus of continuity of the real part of the Cauchy-type integral in the case of a Kr\'al integration curve}
Note that in the case of an Ahlfors-regular curve, for each $\xi\in\gamma$ and each $\delta>0$, the function $\arg(t-\xi)$ has a bounded variation on the set $\gamma\setminus\gamma_{\delta}(\xi)$. However, in general, the function $\arg(t-\xi)$ can be a function of unbounded variation on $\gamma$, because, in particular, it can be unbounded in a neighborhood of the point $\xi$.

Consider the class of curves  $\gamma$, for which the function $\arg(t-\xi)$ has a bounded variation  $V_{\gamma}[\arg(t-\xi)]$ on $\gamma\setminus\{\xi\}$ for all  $\xi\in\gamma$ and, moreover, satisfies the condition
\begin{equation}\label{obm-var-arg}
\sup\limits_{\xi\in\gamma} V_{\gamma}[\arg(t-\xi)]<\infty\,.
\end{equation}
Condition \eqref{obm-var-arg} is equivalent to condition \eqref{um-Kral}, which follows from the Banach indicatrix theorem (see J.~Kr\'al \cite[Lemma 1.2]{Kral-1-1964}). Therefore, curves satisfying condition \eqref{obm-var-arg} will be called the {\it Kr\'al curves}.

In the case where the integration curve is a Kr\'al curve, limiting values \eqref{gran-znach-double-pot} exist everywhere on $\gamma$ for an arbitrary continuous function $g \colon \gamma\rightarrow \mathbb{R}$ (see J.~Kr\'al \cite{Kral-1-1964}). In this case,
estimate (\ref{Zygmund-log-pot}) for the modulus of continuity of the limiting values of the real part of the Cauchy-type integral is specified in the following theorem.

\vskip 1mm

\begin{theorem}\label{theor-oz-cr-Kral}
{\em Let a closed Jordan curve $\gamma$ be a Kr\'al curve and let a function
$g \colon \gamma\rightarrow \mathbb{R}$ be continuous on $\gamma$.
Then the following estimate is true:
 \begin{equation}\label{Zygmund-log-pot-cr-Kral}
\omega\,_{\overline{D^{\pm}}}\Big(\big({\rm Re}\,\widetilde{g}\big)^{\pm},\varepsilon\Big)\le\, c\, \varepsilon\,\int\limits_{\varepsilon}^{2d}\frac{\omega_{\gamma}(g,\eta)}{\eta^2}\,d\eta \qquad \forall\,\varepsilon\in(0,d/8]\,,
\end{equation}
where the constant\, $c$\, depends only on\, $\gamma$\, but does not depend on $\varepsilon$. }
\end{theorem}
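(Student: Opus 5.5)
The plan is to reduce Theorem \ref{theor-oz-cr-Kral} to Theorem \ref{theor-oz-cr-Al}. Two things are needed. First, every Kr\'al curve is Ahlfors-regular and satisfies condition \eqref{nidu-nepr} for every continuous $g$. Second, for a Kr\'al curve the term $M_{\gamma}(g,\varepsilon)$ is itself dominated by $c\,\varepsilon\int_{\varepsilon}^{2d}\omega_{\gamma}(g,\eta)\eta^{-2}\,d\eta$.

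\textbf{Step 1: a Kr\'al curve is Ahlfors-regular.} Fix $x\in\gamma$ and use condition \eqref{um-Kral} in its equivalent form \eqref{obm-var-arg}. The idea is to bound ${\rm mes}\,\gamma_{\varepsilon}(x)$ by integral-geometric (Crofton-type) averaging. Consider the rays from points $\xi\in\gamma$ near $x$, or alternatively count the intersections of $\gamma_{\varepsilon}(x)$ with circles/rays centred at points of $\gamma$. The uniform bound on $\int_0^{2\pi}\mu_{\gamma}(\xi,\phi)\,d\phi$ should then control length at scale $\varepsilon$ by $c\,\varepsilon$.

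A cleaner route, if available, is to use Kr\'al's result that this condition forces $\gamma$ to have bounded turning locally. One could also simply cite the known inclusion (Kr\'al curves are Ahlfors-regular). I expect this step to be the main technical obstacle if it has to be proved from scratch. I would try the Crofton formula: the length of $\gamma_{\varepsilon}(x)$ equals $\frac14\iint n(p,\phi)\,dp\,d\phi$ over lines meeting the disc of radius $\varepsilon$. The line counts there should be controlled by \eqref{obm-var-arg} applied at $\xi=x$, together with the bounded oscillation of $\arg$ on small arcs.

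\textbf{Step 2: bound $M_{\gamma}(g,\varepsilon)$.} For $\xi\in\gamma$ and $0<\delta<\varepsilon$, estimate
\[
\bigg|\int_{\gamma_{\varepsilon}(\xi)\setminus\gamma_{\delta}(\xi)}\big(g(t)-g(\xi)\big)\,d\arg(t-\xi)\bigg|
\le \omega_{\gamma}(g,\varepsilon)\,\sup_{\xi\in\gamma}V_{\gamma}[\arg(t-\xi)].
\]
This gives $M_{\gamma}(g,\varepsilon)\le c\,\omega_{\gamma}(g,\varepsilon)$, which tends to $0$, so condition \eqref{nidu-nepr} holds for every continuous $g$ and Theorem \ref{theor-oz-cr-Al} applies. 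Then, exactly as in the proofs of Lemmas \ref{lem-2} and \ref{lem-3}, use the monotonicity of $\omega_{\gamma}(g,\cdot)$:
\[
\omega_{\gamma}(g,\varepsilon)=2\,\omega_{\gamma}(g,\varepsilon)\,\varepsilon\int_{\varepsilon}^{2\varepsilon}\frac{d\eta}{\eta^2}\le 2\,\varepsilon\int_{\varepsilon}^{2d}\frac{\omega_{\gamma}(g,\eta)}{\eta^2}\,d\eta .
\]

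\textbf{Step 3: conclude.} Substituting this bound for $M_{\gamma}(g,\varepsilon)$ into estimate \eqref{Zygmund-log-pot} gives \eqref{Zygmund-log-pot-cr-Kral}. The constant depends on $\gamma$ only through the Ahlfors constant and $\sup_{\xi}V_{\gamma}[\arg(t-\xi)]$.

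If Step 1 resists a direct proof, there is a fallback. Rerun the proofs of Lemmas \ref{lem-1}--\ref{lem-3} and Theorem \ref{theor-oz-cr-Al}, replacing each use of $\theta_x(\varepsilon)=O(\varepsilon)$ by the bounded variation of $\arg$. For example, the quantities $\int|g(t)-g(\xi)|\,|dt|/|t-\xi|$ over annuli would be rewritten via $|d\ln(t-\xi)|$, whose real part needs separate control. So I would try hard to secure the Ahlfors-regularity first.
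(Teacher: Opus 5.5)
Your proposal is correct and follows the paper's own argument. In Steps 2--3 you do exactly what the paper does: bound $M_{\gamma}(g,\varepsilon)\le\omega_{\gamma}(g,\varepsilon)\sup_{\xi}V_{\gamma}[\arg(t-\xi)]\le c\,\varepsilon\int_{\varepsilon}^{2\varepsilon}\omega_{\gamma}(g,\eta)\eta^{-2}\,d\eta$ and substitute this into \eqref{Zygmund-log-pot}. The paper uses the Ahlfors-regularity of Kr\'al curves without comment, so your Step 1 simply makes explicit what it assumes.

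If you prove Step 1 rather than cite it, the Kr\'al bound at the single point $\xi=x$ is not enough. A comb of teeth radial from $x$ has small angular variation seen from $x$ but large length. Your Crofton idea does work once you apply \eqref{um-Kral} at every point of $\gamma_{\varepsilon}(x)$. Parametrize lines by $(p,\phi)$ with measure $dp\,d\phi$, and set $n(\ell)=\#(\ell\cap\gamma_{\varepsilon}(x))$ and $K=\sup_{\xi}\int_0^{2\pi}\mu_{\gamma}(\xi,\phi)\,d\phi$. Then Crofton and Cauchy--Schwarz give $2\,{\rm mes}\,\gamma_{\varepsilon}(x)=\int n\le(2\pi\varepsilon)^{1/2}\big(\int n^2\big)^{1/2}$. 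The area formula gives $\int n^2\le(\pi+K)\,{\rm mes}\,\gamma_{\varepsilon}(x)$. Together these yield ${\rm mes}\,\gamma_{\varepsilon}(x)\le\frac{\pi(\pi+K)}{2}\,\varepsilon$.
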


\vskip 1mm

\textit{\textbf{Proof.}}
Under condition (\ref{obm-var-arg}) for the curve $\gamma$, the following inequalities hold:
\begin{multline}\label{oz-M-Kral-kr}
M_{\gamma}(g,\varepsilon)\le  \sup\limits_{\xi\in\gamma}\,\sup\limits_{\delta\in(0,\varepsilon)}\,\int\limits_{\gamma_{\varepsilon}(\xi)\setminus\gamma_{\delta}(\xi)}
 \big|g(t)-g(\xi)\big|\,\big|d\arg(t-\xi)\big|\le \\
 \le \omega_{\gamma}(g,\varepsilon)\,\sup\limits_{\xi\in\gamma} V_{\gamma}[\arg(t-\xi)]
 \le c\, \varepsilon\,\int\limits_{\varepsilon}^{2\varepsilon}\frac{\omega_{\gamma}(g,\eta)}{\eta^2}\,d\eta\,,
 \end{multline}
where the constant\, $c$\, depends only on\, $\gamma$, and therefore estimate (\ref{Zygmund-log-pot}) takes form
(\ref{Zygmund-log-pot-cr-Kral}).
\hfill $\Box$

\vskip 2mm

Let us note that the upper estimate \eqref{Zygmund-log-pot-cr-Kral} for the solid modulus of continuity  $\omega\,_{\overline{D^{\pm}}}\Big(\big({\rm Re}\,\widetilde{g}\big)^{\pm},\varepsilon\Big)$, which is considered throughout the entire class of Kr\'al curves $\gamma$ and the entire class of continuous functions $g \colon \gamma\rightarrow\mathbb{R}$, is exact with respect to the order of smallness as $\varepsilon\to 0$.
The specified accuracy of estimate \eqref{Zygmund-log-pot-cr-Kral} will be proved
by constructing an example of a curve $\gamma$ and a continuous function $g \colon \gamma\rightarrow~\mathbb{R}$
such that for the solid modulus of continuity  $\omega\,_{\overline{D^{\pm}}}\Big(\big({\rm Re}\,\widetilde{g}\big)^{\pm},\varepsilon\Big)$,
the lower estimate is realized by the right-hand side of inequality \eqref{Zygmund-log-pot-cr-Kral}.

For fixed constants\, $\sigma\ge 1$\, and\, $k\ge 0$\,, P.M.~Tamrazov \cite[p.~58]{tamrazov} introduced the notion of a {\em normal majorant of the class $(\sigma,k)$} as a non-decreasing function $\mu(\eta)$ satisfying the inequality
\[\mu(\lambda\eta)\le\, \sigma\, \lambda^k \mu(\eta)  \qquad \forall\,\lambda>1 \quad \forall\,\eta>0\,.\]

In the case where $\mu(\eta)$ is a normal majorant of the class\, $(\sigma,1)$\,, the following inequality holds:
\begin{equation}\label{mu-nasl-pivad}
\frac{\mu(\eta_2)}{\eta_2}\le \sigma\,\frac{\mu(\eta_1)}{\eta_1} \qquad\forall\,\eta_2>\eta_1>0\,.
\end{equation}

Note that in the case of the circle $\gamma$ it follows
from the properties of the modulus of continuity (see, for example, V.K.~Dzyadyk and I.A.~Shevchuk \cite{Dz-Shev-6-3}) that the modulus of continuity $\omega_{\gamma}(g,\eta)$ of an arbitrary continuous function $g \colon \gamma\rightarrow \mathbb{R}$ is a normal majorant of the class\, $(2,1)$.

\vskip 2mm

The following statement is true.

\vskip 1mm

\begin{theorem}\label{theor-til-oz-znyzu}
{\em Let $\gamma=\{t\in\mathbb{C} : |t+1|=1\}$
and let $\mu(\eta)$ be a normal majorant of the class\, $(\sigma,1)$ such that $\mu(0^+)=0$.
Then for a continuous function $g \colon \gamma\rightarrow \mathbb{R}$ given by the equality
\[ g(t)=
   \displaystyle 8\, \int\limits_0^1 \mu(\eta)
   \,\frac{\big(({\rm Re}\,t)^2-({\rm Im}\,t)^2\big)
   (\eta^4+|t|^4)-2\eta^2|t|^4}{\big|\eta^2-t^2\big|^4}\,\eta\,d\eta \qquad \forall\,t\in\gamma\,, \]
the following inequality holds:
\begin{equation}
\label{est-up-f}
\omega_{\gamma}(g,\varepsilon) \le \, C\,\mu(\varepsilon) \qquad \forall\,\varepsilon\ge 0\,,
\end{equation}
where the constant\, $C$\, does not depend on $\varepsilon$,
and for the solid modulus of continuity of the function
$\big({\rm Re}\,\widetilde{g}\big)^{\pm}$ 
 the following estimate is true:
 \begin{equation}\label{log-pot-oz-znyzu+}
\omega\,_{\overline{D^{\pm}}}\Big(\big({\rm Re}\,\widetilde{g}\big)^{\pm},\varepsilon\Big)\ge\, c\, \varepsilon\,\int\limits_{\varepsilon}^{4}\frac{\mu(\eta)}{\eta^2}\,d\eta \qquad \forall\,\varepsilon\in(0,1/4]\,,
\end{equation}
where a real constant\, $c$\, does not depend on $\varepsilon$. }
\end{theorem}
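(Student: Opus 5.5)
The plan starts from an algebraic identity for the kernel. Since $\mathrm{Re}\big(t^2\,\overline{(\eta^2-t^2)}^{\,2}\big)=\eta^4\,\mathrm{Re}\,t^2-2\eta^2|t|^4+|t|^4\,\mathrm{Re}\,\overline{t}^{\,2}$, the integrand equals $\mathrm{Re}\big(t^2/(\eta^2-t^2)^2\big)$. Hence $g=\mathrm{Re}\,F$ on $\gamma$, where
\[F(z)=8\int_0^1\mu(\eta)\,\frac{z^2\,\eta}{(\eta^2-z^2)^2}\,d\eta .\]
By partial fractions,
\[\frac{z^2}{(z^2-\eta^2)^2}=\frac14\Big(\frac{1}{(z-\eta)^2}+\frac{1}{(z+\eta)^2}\Big)+\frac{1}{4\eta}\Big(\frac{1}{z-\eta}-\frac{1}{z+\eta}\Big).\]
The poles $-\eta\in(-1,0)$ lie in $D^+$ (the disk $|z+1|<1$), and the poles $\eta\in(0,1)$ lie in $D^-$. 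This splits $F=F_1+F_2$, where $F_1$ (poles at $+\eta$) is holomorphic in $D^+$ and $F_2$ (poles at $-\eta$) is holomorphic in $D^-$ and vanishes at $\infty$. For $t\in\gamma$ we have $\bar t=-t/(t+1)$, so $\overline{F_j(t)}$ is the boundary value of an explicit rational-type function holomorphic on one side. Thus $g=\frac12(F_1+F_2+\overline{F_1}+\overline{F_2})$ is written as a sum of boundary values of functions holomorphic in $D^+$ or in $D^-$. The Cauchy formula and the Cauchy theorem then give $\widetilde g$ explicitly in $D^\pm$: it equals the sum of the pieces holomorphic in $D^+$ (for $z\in D^+$), respectively minus the sum of the pieces holomorphic in $D^-$ normalized at $\infty$ (for $z\in D^-$). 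From this we read off $\mathrm{Re}\,\widetilde g$ in closed form as an integral in $\eta$ against $\mu(\eta)$.

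For the upper bound \eqref{est-up-f}, we fix $t_1,t_2\in\gamma$ with $|t_1-t_2|=\varepsilon$ and split the $\eta$-integral at $\eta\asymp\max(\varepsilon,\min|t_k|)$. Near $0$ the circle is tangent to the imaginary axis, so $\mathrm{Re}\,t\approx-|t|^2/2$ and $|\eta^2-t^2|\asymp\eta^2+|t|^2$. The kernel is then bounded by $c\,\eta\,(\eta^2+|t|^2)^{-1}\cdot(\eta^2+|t|^2)^0$ in the appropriate scale, and its $t$-derivative by an extra factor $(\eta+|t|)^{-1}$. On small $\eta$ we estimate each $g(t_k)$ separately; on large $\eta$ we use the mean value theorem, which produces $\varepsilon\int_{\varepsilon}^{1}\mu(\eta)\eta^{-2}\,d\eta$-type terms. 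These terms are controlled using \eqref{mu-nasl-pivad}, since $\mu(\eta)/\eta\le\sigma\mu(\varepsilon)/\varepsilon$ for $\eta>\varepsilon$. The kernel has zero mean in the scaling variable (it is $\mathrm{Re}$ of a derivative-type expression), which is exactly what prevents a logarithmic loss.

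For the lower bound \eqref{log-pot-oz-znyzu+}, we compare $(\mathrm{Re}\,\widetilde g)^{+}$ at $z=-\varepsilon\in D^+$ and at $0\in\gamma$ (the point $0$ is the tangency point where the singularities accumulate); for $D^-$ we use $z=\varepsilon$ and $0$. From the explicit formula, the dominant contribution to the difference is of the form
\[\mathrm{Re}\int_0^1\mu(\eta)\Big(\frac{1}{\eta+\varepsilon}-\frac{1}{\eta}\Big)\,d\eta\;\asymp\;-\varepsilon\int_\varepsilon^1\frac{\mu(\eta)}{\eta^2}\,d\eta,\]
which comes from the $\frac{1}{4\eta}\cdot\frac{1}{z\pm\eta}$ terms multiplied by $8\eta\mu(\eta)$. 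The remaining terms, namely the squared poles and the reflected pieces, should be shown to be $O(\mu(\varepsilon))$, again via \eqref{mu-nasl-pivad}. Since $\mu(\varepsilon)\le\sigma\varepsilon\int_\varepsilon^{2\varepsilon}\mu(\eta)\eta^{-2}\,d\eta$ is only of the same order as the main term, we must track constants; alternatively, we take the evaluation points at $z=-\lambda\varepsilon$ with a large fixed $\lambda$, and then absorb the gap between $\int_\varepsilon^4$ and $\int_{\lambda\varepsilon}^{1}$ using monotonicity of $\mu$.

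The main obstacle will be the bookkeeping of the reflected conjugate pieces $\overline{F_j(t)}$ under $\bar t=-t/(t+1)$. We must verify that they do not cancel the main term and that their contribution near $0$ is genuinely $O(\mu(\varepsilon))$ rather than of the order of the main term. The first thing to try is a direct computation of these pieces at real points $z$, where all quantities are real.
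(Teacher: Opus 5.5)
Your reduction is sound. The integrand is $\mathrm{Re}\,\frac{t^2\eta}{(\eta^2-t^2)^2}$. The reflection $\bar t=-t/(t+1)$ turns $\overline{F_j(t)}$ into boundary values of $G_j(z):=F_j\big(-z/(z+1)\big)$, with $F_1,G_2$ holomorphic in $D^+$ and $F_2,G_1$ in $D^-$. The pairs $-\varepsilon,0$ (resp.\ $\varepsilon,0$) are the right test points. (The paper states this theorem without proof, so I am judging your plan on its own.)

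The gap is in the lower-bound bookkeeping: the step you say ``should be shown'' is false. In $F_1(-\varepsilon)-F_1(0)$, your simple-pole part is $2\varepsilon\int_0^1\frac{\mu(\eta)}{\eta(\eta+\varepsilon)}\,d\eta$, while the double-pole part is
\[
-2\varepsilon\int_0^1\mu(\eta)\,\frac{2\eta+\varepsilon}{\eta(\eta+\varepsilon)^2}\,d\eta .
\]
Its modulus is at least $\varepsilon\int_\varepsilon^1\mu(\eta)\eta^{-2}\,d\eta$, so it is of the same order as your main term, larger, and of opposite sign. The reflected contribution $G_2(-\varepsilon)-G_2(0)=F_2\big(\tfrac{\varepsilon}{1-\varepsilon}\big)$ is also of order $\varepsilon\int_\varepsilon^1\mu(\eta)\eta^{-2}\,d\eta$. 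For $\mu(\eta)=\eta$ the three pieces are roughly $2$, $-4$ and $-2$ times $\varepsilon\ln(1/\varepsilon)$, so none of the ``remaining terms'' is $O(\mu(\varepsilon))$.

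It is worse when $\mu$ is not Dini, e.g.\ $\mu(\eta)=1/\ln(e/\eta)$ for $\eta\le 1$ and $\mu=1$ beyond. This is a majorant of class $(1,1)$, and it is exactly the case where the theorem says more than Zygmund's estimate. There the simple- and double-pole parts are $+\infty$ and $-\infty$ respectively, and they are separately divergent at $z=0$; only their sum is meaningful. Moving the test point to $-\lambda\varepsilon$ cures neither problem.

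The missing idea is never to separate the two kinds of poles. The partial fractions collapse to
\[
F_1(z)=2z\int_0^1\frac{\mu(\eta)}{(\eta-z)^2}\,d\eta,\qquad F_2(z)=-2z\int_0^1\frac{\mu(\eta)}{(\eta+z)^2}\,d\eta .
\]
On $\overline{D^+}$ we have $|\eta-z|^2\ge\eta^2+|z|^2$, and near $0$ in $\overline{D^-}$ we have $|\eta+z|^2\ge c(\eta^2+|z|^2)$. Hence these functions are continuous on $\overline{D^+}$, resp.\ $\overline{D^-}$, and vanish at $0$; also $F_2(\infty)=0$. The Cauchy formula then gives $\widetilde g=\frac12(F_1+G_2)+\frac12F_1(-1)$ in $D^+$ and $\widetilde g=-\frac12(F_2+G_1)+\frac12F_1(-1)$ in $D^-$. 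At real points everything is real and
\[
\big(\mathrm{Re}\,\widetilde g\big)^{+}(-\varepsilon)-\big(\mathrm{Re}\,\widetilde g\big)^{+}(0)=\tfrac12\Big(F_1(-\varepsilon)+F_2\big(\tfrac{\varepsilon}{1-\varepsilon}\big)\Big).
\]
Both summands are negative, so the reflected piece reinforces the main term rather than competing with it. The difference is therefore at most
\[
\tfrac12F_1(-\varepsilon)=-\varepsilon\int_0^1\mu(\eta)(\eta+\varepsilon)^{-2}\,d\eta\le-\tfrac{\varepsilon}{4}\int_\varepsilon^1\mu(\eta)\eta^{-2}\,d\eta .
\]
For $D^-$ take $z=\varepsilon$: the difference is $-\frac12\big(F_2(\varepsilon)+F_1(-\tfrac{\varepsilon}{1+\varepsilon})\big)\ge\frac{\varepsilon}{4}\int_\varepsilon^1\mu(\eta)\eta^{-2}\,d\eta$. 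Finally,
\[
\int_1^4\mu\,\eta^{-2}\,d\eta\le\tfrac34\mu(4)\le12\sigma\mu(\tfrac14)\le4\sigma\int_{1/4}^1\mu\,\eta^{-2}\,d\eta ,
\]
which converts $\int_\varepsilon^1$ into $\int_\varepsilon^4$. No constant tracking is needed.

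In the upper bound, the kernel bound $c\,\eta(\eta^2+|t|^2)^{-1}$ is too weak. The ``zero mean'' is not available, since $\int_0^\infty t^2\eta(\eta^2-t^2)^{-2}\,d\eta=-\frac12$, and it is not needed. Use $|\eta^2-t^2|\ge c(\eta^2+|t|^2)$ on $\gamma$. Then $K(t,\eta):=t^2\eta(\eta^2-t^2)^{-2}$ satisfies $|K|\le c|t|^2\eta(\eta^2+|t|^2)^{-2}$ and $|\partial_tK|\le c|t|\eta(\eta^2+|t|^2)^{-2}$. This $\eta^{-3}$ decay together with \eqref{mu-nasl-pivad} gives $|g(t)|\le C\mu(|t|)$. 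For $|t_1|\ge4\varepsilon$ the mean value estimate gives $|g(t_1)-g(t_2)|\le C\varepsilon\mu(|t_1|)/|t_1|\le C\sigma\mu(\varepsilon)$, with no logarithmic loss.
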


\vskip 2mm

Theorem \ref{theor-til-oz-znyzu} and inequalities \eqref{est-up-f} and \eqref{oz-M-Kral-kr} imply the next statement.

\vskip 1mm

\begin{corolary}\label{cor-til-oz-znyzu}
{\em  Under the conditions of Theorem \ref{theor-til-oz-znyzu}, for all $\varepsilon\in(0,1/4]$, the following estimates are true:
\begin{equation}\label{tochn-ozinok}
\omega\,_{\overline{D^{\pm}}}\Big(\big({\rm Re}\,\widetilde{g}\big)^{\pm},\varepsilon\Big)\ge\, c\, \varepsilon\,\int\limits_{\varepsilon}^{4}\frac{\omega_{\gamma}(g,\eta)}{\eta^2}\,d\eta
\ge \, c_1\,\left(\,M_{\gamma}(g,\varepsilon)+ \varepsilon\,\int\limits_{\varepsilon}^{4}\frac{\omega_{\gamma}(g,\eta)}{\eta^2}\,d\eta \right)\,,
\end{equation}
where positive constants\, $c$\, and\, $c_1$\,  do not depend on $\varepsilon$. }
\end{corolary}

\vskip 1mm

Thus, inequalities \eqref{tochn-ozinok} show that the upper estimates for the solid moduli of continuity
of the logarithmic double layer potential, which are given in Theorems \ref{theor-oz-cr-Al} and \ref{theor-oz-cr-Kral}, are exact with respect to the order of smallness as $\varepsilon\to 0$.

The authors dedicate their work to the memory of Professor Oleg Gerus.

\vskip 2mm

{\bf Author Contributions.}
Special cases of Theorems 1 and 2, including
estimates for the moduli of continuity on the integration curve, were obtained jointly by the authors.
All estimates for the solid moduli of continuity were obtained by the first author.

{\bf Acknowledgements.}
The first author was supported by the National
Research Foundation of Ukraine, Project number 2025.07/0014, Project name "Modern problems of Mathematical Analysis and Geometric Function Theory"\/.




\end{document}